\documentclass[11pt]{amsart}
\usepackage{amsmath,amssymb,amsfonts,amsthm, amscd,indentfirst}
\usepackage{amsmath,latexsym,amssymb,amsmath,
amscd,amsthm,amsxtra}
\usepackage{hyperref}
\usepackage{color}

\newtheorem{theorem}{Theorem}[section]
\newtheorem{prop}{Proposition}[section]

\newtheorem{definition}{Definition}[section]
\newtheorem{corollary}{Corollary}[section]

\newtheorem{remark}{\textbf{Remark}}[section]

\textwidth 14cm \textheight 22cm \hoffset=-1cm \voffset=-2cm

\def\rr{\mathbb{R}}
\def\ss{\mathbb{S}}

\def\bb{\mathbb{B}}

\def\O{\Omega}
\def\p{\partial}
\def\e{\epsilon}
\def\a{\alpha}

\def\g{\gamma}
\def\d{\delta}

\def\s{\sigma}

\def\p{\partial}

\def\S{{\Sigma}}
\def\<{\langle}
\def\>{\rangle}
\def\div{{\rm div}}
\def\n{\nabla}

\def \ds{\displaystyle}
\def \vs{\vspace*{0.1cm}}

\begin{document}

\title[Guan-Li type mean curvature flow]{Guan-Li type mean curvature flow for  free boundary  hypersurfaces in a ball}
\author{Guofang Wang}\thanks{GW is partly supported by  Priority Programme  ``Geometry at Infinity'' (SPP 2026) of DFG}
\address{ Albert-Ludwigs-Universit\"at Freiburg,
Mathematisches Institut,
Ernst-Zermelo-Str. 1,
79104 Freiburg, Germany}
\email{guofang.wang@math.uni-freiburg.de}
\author{Chao Xia}
\address{School of Mathematical Sciences\\
Xiamen University\\
361005, Xiamen, P.R. China}
\email{chaoxia@xmu.edu.cn}\thanks{CX is supported by NSFC (Grant No. 11871406), the Natural Science Foundation of Fujian Province of China (Grant No. 2017J06003) and the Fundamental Research Funds for the Central Universities (Grant No. 20720180009).}

\begin{abstract}
    In this paper we introduce a Guan-Li type volume preserving mean curvature flow for  free boundary hypersurfaces in a ball. We give a concept of star-shaped free boundary hypersurfaces in a ball and show that the Guan-Li type mean curvature flow has long time existence and converges to a free boundary spherical cap, provided the initial data is star-shaped.
     \end{abstract}

\subjclass[2010]{53C21, 53C44}

\date{}
\maketitle

\medskip

\section{Introduction}
 Let $\bb^{n+1}\subset\rr^{n+1}$ be the open unit Euclidean ball centered at the origin and   $\ss^n= \partial\bb^{n+1}\subset\rr^{n+1} $ the unit sphere.
 In this paper, we shall consider a mean curvature type flow for compact hypersurfaces in $\bb^{n+1}$ with free boundary on $\ss^n$.  
Let $\S\subset \bar \bb^{n+1}$ be a properly embedded compact hypersurface with boundary, which is given by 
$$x: M\to \bar \bb^{n+1},$$ where $M$ is a compact Riemannian manifold with boundary $\partial M$.
Here properly embedded means that $${\rm int}(\S)=x({\rm int}(M))\subset \bb^{n+1} \quad \hbox{ and } \quad \p\S=x(\p M)\subset \p \bb^{n+1}.$$
We further assume that $\S$ has  free boundary, in the sense that $\S$ intersects $\p \bb^{n+1}=\ss^n$ orthogonally, that is,
$$\<\nu, \mu\circ x\>=0 \quad \hbox{ on } \p M,$$
where $\nu$ is a unit normal vector field of $x$, which will be specified later, and  $\mu$ is the outward unit normal vector field of $\ss^n$,
i.e., $\mu\circ x=x$ along $\p M$.

Let $e\in \ss^n\subset \rr^{n+1}$ be a fixed unit vector field.
Consider a family of properly embedded compact hypersurfaces $\{\S_t\}_{t\in [0, T)}$ with  free boundary, 
given by embeddings $$x: M\times [0, T)\to \bar \bb^{n+1},$$  satisfying
\begin{equation}\label{flow}
\left\{
\begin{array}{rcll}
\p_t x& =&\ds \vs (n \<x, e\> -H\<X_e, \nu\>)\nu & \hbox{ in }M\times [0, T),\\
\<\nu, \mu\circ x\>&= &0  & \hbox{ on } \p M\times [0, T). 
\end{array}
\right.
\end{equation}
with an initial surface $x(\cdot, 0)=x_0.$
Here $\nu$ and $H$ are a unit normal vector field and the mean curvature of $x(\cdot, t)$ respectively,  $X_e$ is a fixed vector field in $\rr^{n+1}$ given by
$$X_e=X_e(x)=\<x, e\>x-\frac12(|x|^2+1)e,$$
for a fixed unit vector $e$.
This vector field plays an important role in our recent paper \cite{WX_2019}.
We choose $\nu$ in the following way. Let $\O_t$ be the component of the enclosed domain by $\S_t$ and $\ss^n$ which contains $e$ in its interior. 
Then $\nu$ is chosen to be the outward normal of $\S_t$ with respect to $\O_t$. 
Also, throughout this paper, we make the convention that  the enclosed domain $\O_t$ of $\S_t$ and $\ss^n$ is the one  $e$ in its interior.
The volume of the enclosed domain $\O_t$ of $\S$ is called the enclosed volume of $\S_t$.

The flow is designed in this way so that the enclosed volume of $\S_t$ is preserved along the flow \eqref{flow}. We will discuss it later. Such kinds of flow was first considered by Guan-Li \cite{GL} in the setting of closed hypersurfaces in space forms and by Guan-Li-Wang \cite{GLW}
in the setting of closed hypersurfaces in warped product spaces.

The main objective of this paper is to study the existence and  the convergence of the flow \eqref{flow}. For this aim we 
introduce a concept of {\it star-shaped hypersurfaces with free boundary in $\bar \bb^{n+1}$}. 
To arrive at this, we should first make some comments on the vector field $X_e$ above.
$X_{e}$ is a conformal Killing vector field with $$\<X_e(x), x\>=0, \forall x\in \p \bb^{n+1}.$$ More precisely,  denoting the Euclidean metric by $\delta$, we have $$\mathcal{L}_{X_e}\delta=\<x, e\>\delta.
$$  
Let  $\phi_t: \bar \bb^{n+1}\to \bar \bb^{n+1}$ be the  one-parameter family of conformal transformations
generated by $X_{e}$. 
Let $\Pi_e$ be the hyperplane which passes through the origin and is orthogonal to $e$. For each point $p\in \Pi_e$, there exists a unique planar circle  
passing through $p$ and $\pm e$. One can check that the integral curves of $X_e$ are given by the intersection of all such planar circles with $\bb^{n+1}$.
We introduce star-shaped hypersurfaces with free boundary in $\bar \bb^{n+1}$.
\begin{definition} 
 1). A proper embedded hypersurface $\S\subset \bar\bb^{n+1}$ is called 
             {\rm star-shaped  (with respect to $e$)} if $\S$ intersects each integral curve of $X_e$ exactly once. 
             
 2).
   A proper embedded hypersurface $\S\subset \bar\bb^{n+1}$ is called {\rm strictly star-shaped  (with respect to $e$)} if 
   \begin{equation}\label{star}
 \<X_e, \nu\>>0.
\end{equation}
\end{definition}

 For our purpose we will consider   strictly star-shaped hypersurfaces in $\bar \bb^{n+1}$ in this paper. 
 This condition is  slightly stronger than the condition of star-shapedness, but clearly much weaker than the convexity. 
For the simplicity in this paper we call hypersurfaces satisfying \eqref{star} star-shaped hypersurfaces.

From now on we consider star-shaped hypersurfaces. Being such a hypersurface, it is necessary that $M$ is of ball type. Therefore we use $M=  \bar \ss_+^n$,
the closed hemisphere.

Our main result is the following
\begin{theorem}\label{mainthm}
Let $\S\subset \bar \bb^{n+1} (n\ge 2)$  be a properly embedded compact hypersurface with free boundary,
given by $x_0: \bar \ss_+^n\to \bar \bb^{n+1},$ which is star-shaped with respect to $e$. Then there exists a unique solution 
$x: \bar \ss_+^n\times [0, \infty)\to \bar \bb^{n+1}$ to \eqref{flow}.
Moreover, $x(\cdot, t)$ converges smoothly to a spherical cap { or the totally geodesic $n$-ball}, whose enclosed domain has the same volume as $\S$. 
When $n\ge 3$, or $n=2$ and the enclosed volume of $x_0$ is not that of a half ball, $x(\cdot, t)$ converges exponentially fast.
\end{theorem}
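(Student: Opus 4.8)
The plan is to run the standard parabolic machinery for mean-curvature-type flows, adapted to the free boundary setting, in four stages: reduce the flow to a scalar parabolic equation using star-shapedness, establish a priori $C^0$ and $C^1$ estimates (i.e. preservation of star-shapedness and a uniform gradient bound), upgrade to $C^\infty$ estimates via a maximum principle for $H$ together with parabolic Schauder/Krylov-Safonov theory up to the boundary, and finally analyze the limit using the monotonicity of a suitable geometric functional. First, I would parametrize $\Sigma_t$ over the fixed hemisphere $\bar\ss_+^n$ by the "radial" coordinate along the integral curves of $X_e$: write $\phi_t$ for the conformal flow generated by $X_e$ and use it to build a graph function $u: \bar\ss_+^n \times [0,T) \to \rr$. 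A direct computation (analogous to \cite{GLW}) shows that \eqref{flow} becomes a scalar, strictly parabolic, quasilinear PDE for $u$, with an oblique (in fact, after the free boundary condition is translated, Neumann-type) boundary condition on $\partial\bar\ss_+^n$, coming from $\langle\nu,\mu\circ x\rangle = 0$; short-time existence and uniqueness then follow from standard theory for quasilinear parabolic equations with oblique boundary conditions.

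The heart of the matter is the a priori estimates. The key structural point is that $n\langle x,e\rangle - H\langle X_e,\nu\rangle$ is, up to the conformal factor, exactly the quantity that makes $\langle X_e,\nu\rangle$ (equivalently, the reciprocal of the gradient of $u$) satisfy a good equation: one should check that the flow speed is designed so that star-shapedness $\langle X_e,\nu\rangle > 0$ is preserved, i.e. $\min_{\Sigma_t}\langle X_e,\nu\rangle$ is non-decreasing, via the maximum principle applied to the evolution equation of $\langle X_e,\nu\rangle$ (or of $u$ itself, using that $\langle X_e, \nu\rangle$ stays bounded below forces a $C^1$ bound on $u$); the free boundary condition must be used to control the boundary term in the maximum principle, and here the fact that $\langle X_e,x\rangle = 0$ on $\ss^n$ is essential. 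Simultaneously one needs a $C^0$ bound keeping $\Sigma_t$ away from $\pm e$; this should follow from comparison with the static spherical caps (which are fixed points of the flow, by design) using the preserved enclosed volume. Once $\|u\|_{C^1}$ is controlled, the equation is uniformly parabolic with bounded coefficients, so Krylov-Safonov and parabolic Schauder estimates up to the boundary give uniform $C^\infty$ bounds on every finite time interval, hence long-time existence $T = \infty$.

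For the convergence, the natural approach is to exhibit a monotone quantity. The design of the flow suggests that the area $|\Sigma_t|$ is non-increasing while the enclosed volume is fixed; alternatively, one monitors a Minkowski-type or "$\langle x,e\rangle$-weighted" quantity whose time derivative is a signed integral vanishing precisely on spherical caps (this is the free boundary analogue of the argument in \cite{GL,GLW}, and should use the conformal Killing property $\mathcal L_{X_e}\delta = \langle x,e\rangle\,\delta$ in an integration-by-parts identity with no boundary contribution because $\langle X_e,\mu\rangle = 0$ on $\ss^n$). With uniform $C^\infty$ bounds, this forces $\Sigma_t$ to subconverge to a free boundary hypersurface on which the monotone quantity is stationary; a Heintze–Karcher-type or Alexandrov-type rigidity statement for free boundary hypersurfaces in the ball (available from \cite{WX_2019}) then identifies the limit as a spherical cap or the flat totally geodesic $n$-ball, of the prescribed enclosed volume. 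To get full (not just sub-sequential) convergence and the exponential rate, I would linearize the flow about the limiting cap: the linearized operator is the Jacobi operator with Robin boundary conditions, whose spectrum on $\bar\ss_+^n$ has a spectral gap except in the single degenerate case $n=2$ with the half-ball volume (where a one-parameter family of equal-volume caps exists and the kernel is nontrivial). Standard linear stability arguments then yield exponential convergence in the gap case, and a Lojasiewicz–Simon or direct ODE argument handles the remaining slow (polynomial) convergence in the exceptional case; I expect the identification and handling of precisely this degenerate $n=2$ case, together with ensuring all boundary terms in the integral estimates genuinely vanish, to be the main obstacle.
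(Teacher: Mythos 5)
Your first three stages (reduction to a scalar quasilinear Neumann problem on $\bar\ss^n_+$ via the conformal flow of $X_e$, the $C^0$ bound by comparison with the static caps, a gradient bound preserving $\<X_e,\nu\>>0$, and then divergence-form quasilinear parabolic theory for long-time existence) match the paper's argument in substance. The genuine gap is in the convergence and rate analysis. Your exponential convergence rests on a claimed spectral gap of the linearized (Jacobi-type) operator at the limiting cap, "except in the single degenerate case $n=2$ with the half-ball volume, where a one-parameter family of equal-volume caps exists and the kernel is nontrivial." Both halves of this are wrong. First, there is no one-parameter family of equal-volume stationary caps in any case: the stationary hypersurfaces of \eqref{flow} are exactly the caps $C^{\pm}_r(e)$ and $C_\infty(e)$ about the fixed axis $e$, and their enclosed volume is strictly monotone in the parameter, so each volume picks out a unique stationary hypersurface. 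Second, the linearization about \emph{every} cap has a nontrivial kernel for every $n$ and every volume: in the scalar picture the stationary solutions are the constants $\g\equiv c$, so constants (the tangent direction to the family $C^\pm_r(e)$) always lie in the kernel of the linearized operator. Hence the naive "gap" argument never applies as stated; one must mod out this direction using the volume constraint, and once one does, the dichotomy you propose no longer singles out the $n=2$ half-ball case, so your case distinction does not reproduce the statement of the theorem. In addition, the linearization step presupposes that $\Sigma_t$ eventually stays in a small neighborhood of a single cap, which your subconvergence-plus-rigidity step does not yet provide, so full convergence in the non-gap case is not actually established by the Lojasiewicz--Simon remark without further work.

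For contrast, the paper's mechanism is entirely different and explains where the case distinction really comes from: the maximum-principle computation for $|\n^\s\g|^2$ yields \eqref{eq-grad}, whose zeroth-order coefficient $-\tfrac43 n+\tfrac{10}{3}$ is negative precisely when $n\ge 3$, giving exponential decay of the gradient (hence smooth exponential convergence to a constant, i.e.\ a cap) with no linearization at all. For $n=2$, the area monotonicity of Proposition \ref{pro3.3} (your monotone quantity) is used quantitatively: it gives $\max|\kappa_1-\kappa_2|=o_t(1)$, which is fed back into the gradient evolution at the spatial maximum to force $|\n^\s\g|^2\to 0$, and the exponential rate when the volume is not that of a half ball comes from a refined version of the same pointwise estimate, whose damping coefficient degenerates exactly as $\rho_0\to 1$. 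So the exceptional $n=2$ half-ball case in the theorem is of analytic, not spectral, origin, and your proposed route would need substantial repair (kernel handling via the volume constraint, and a genuine argument for full convergence) before it could yield the stated conclusions.
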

{ The family of spherical caps is given by   $$C^{\pm}_{r}( e)=\{x\in \bar \bb^{n+1}: |x\pm \sqrt{r^2+1} e| = r\}, r>0$$ and the totally geodesic $n$-ball
is given by $$C_\infty(e)=\{x\in \bar \bb^{n+1}: \<x, e\>=0\}.$$} 
It is clear that either  each spherical cap $C^{\pm}_r(e)$ or  the totally geodesic $n$-ball $C_\infty(e)$ has free boundary, that is, 
it intersects the support ${\mathbb S}^n$ orthogonally.

As a direct consequence, we give a flow proof of the isoperimetric problem for free boundary hypersurfaces in $\bb^{n+1}$.
\begin{corollary}\label{coro1}
 Among  star-shaped free boundary hypersurfaces with fixed enclosed volume, the  totally geodesic $n$-ball or the spherical caps have minimal area. 
\end{corollary}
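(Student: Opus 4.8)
The plan is to run the flow \eqref{flow} starting from the given hypersurface and to exploit the three properties that make it a natural vehicle for a flow proof of the isoperimetric inequality: it preserves the enclosed volume, it is non-increasing in area, and — by Theorem~\ref{mainthm} — it converges smoothly to one of the model hypersurfaces $C_r^{\pm}(e)$ or to $C_\infty(e)$, whose enclosed volume is then forced to equal that of the initial surface.

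First I would record the two monotonicity statements. Let $\{\S_t\}$ solve \eqref{flow} with $\S_0=\S$ a given star-shaped free boundary hypersurface, and abbreviate the normal speed by $f=n\langle x,e\rangle-H\langle X_e,\nu\rangle$. Since the support sphere $\ss^n$ is fixed, $\frac{d}{dt}|\O_t|=\int_{\S_t}f\,dA$; and because $\langle X_e,x\rangle=0$ on $\ss^n$, the boundary term in the Minkowski-type identity of \cite{WX_2019} drops out, so $\int_{\S_t}f\,dA=0$ and $|\O_t|$ is constant — this is exactly why \eqref{flow} is written as it is. For the area, the first variation formula gives
\[
\frac{d}{dt}|\S_t|=\int_{\S_t}Hf\,dA+\int_{\p\S_t}\langle\p_tx,\eta\rangle\,ds ,
\]
where $\eta$ is the outward conormal of $\p\S_t$ in $\S_t$; the free boundary condition forces $\eta=\mu$ and $\langle\nu,\mu\rangle=0$ along $\p\S_t$, so the boundary integral vanishes and $\frac{d}{dt}|\S_t|=\int_{\S_t}Hf\,dA$. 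To see this is non-positive, I would invoke the second free-boundary Minkowski formula of \cite{WX_2019} (whose boundary term again vanishes by $\langle X_e,x\rangle=0$) to rewrite $n\int_{\S_t}H\langle x,e\rangle\,dA=\frac{2n}{n-1}\int_{\S_t}\sigma_2\langle X_e,\nu\rangle\,dA$, obtaining
\[
\frac{d}{dt}|\S_t|=-\int_{\S_t}\langle X_e,\nu\rangle\Big(H^2-\tfrac{2n}{n-1}\sigma_2\Big)dA\le0 ,
\]
since $\langle X_e,\nu\rangle>0$ by star-shapedness and $H^2\ge\frac{2n}{n-1}\sigma_2$ pointwise by the Newton--MacLaurin inequality. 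This is precisely the mechanism used by Guan--Li \cite{GL} in the closed case.

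With these in hand the corollary follows quickly. Given a star-shaped free boundary hypersurface $\S$ with enclosed volume $V$, let $\{\S_t\}$ be the solution of \eqref{flow} with $\S_0=\S$; by Theorem~\ref{mainthm} it exists for all time and converges smoothly to some $\S_\infty\in\{C_r^{\pm}(e)\}_{r>0}\cup\{C_\infty(e)\}$. By smooth convergence together with the volume monotonicity, $\S_\infty$ has enclosed volume $V$; by the area monotonicity, $|\S|=|\S_0|\ge\lim_{t\to\infty}|\S_t|=|\S_\infty|$. Finally, along each family $\{C_r^{\pm}(e)\}_{r>0}$ the enclosed volume is a strictly monotone function of $r$ sweeping out $(0,|\bb^{n+1}|)$, with $C_\infty(e)$ realizing the value $|\bb^{n+1}|/2$; hence the model with enclosed volume $V$ is unique up to rotations about the axis $\R e$, so its area is a well-defined function of $V$, namely $|\S_\infty|$. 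Therefore every star-shaped free boundary hypersurface with enclosed volume $V$ has area at least that of the spherical cap, or the totally geodesic $n$-ball, with the same enclosed volume, which is the assertion of Corollary~\ref{coro1}.

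I expect the only genuinely non-formal step to be the area monotonicity $\frac{d}{dt}|\S_t|\le0$: it rests on the two free-boundary Minkowski formulas of \cite{WX_2019}, where the orthogonality $\langle X_e,x\rangle=0$ on $\ss^n$ is what kills the boundary contributions, combined with Newton--MacLaurin and the sign $\langle X_e,\nu\rangle>0$ guaranteed by star-shapedness. Everything else — the volume computation, the vanishing of the first-variation boundary term via the free boundary condition, and the monotonicity of the enclosed volume along the model families — is routine.
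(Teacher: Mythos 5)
Your proposal is correct and follows essentially the same route as the paper: the paper deduces Corollary \ref{coro1} directly from Theorem \ref{mainthm} together with Proposition \ref{pro3.3}, whose proof of volume preservation and area monotonicity uses exactly the two Minkowski formulas \eqref{eq3}--\eqref{eq4} and the identity $H^2-\frac{2n}{n-1}\sigma_2(\kappa)=\frac{1}{n-1}\sum_{i<j}(\kappa_i-\kappa_j)^2\ge 0$, which is your Newton--MacLaurin step. Your additional remarks (vanishing of the first-variation boundary term via the free boundary condition, and identification of the limit cap as the one with the prescribed volume) are correct fillings-in of details the paper leaves implicit.
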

For general hypersurfaces it is a classical result  proved  by Burago-Mazaya  \cite{BM},
 Bokowsky-Sperner \cite{BS} and Almgren \cite{A}, by using the method of symmetrization.

The introduction of flow \eqref{flow} is motivated by the paper of Guan-Li \cite{GL}, in which they used at the first time 
the Minkowski formula for closed hypersurfaces to define a geometric flow for isoperimetric problems. In the same spirit, the flow \eqref{flow} is based on the following two Minkowski formulas obtained in \cite{WX_2019} for free boundary hypersurfaces
\begin{eqnarray}\label{eq3}
n\int_\Sigma \< x, e \> &=&  \int_\Sigma \< X_e, \nu\> H,\\
\label{eq4}
\int_\Sigma \<x, e\>  H&=&\frac 2{n-1}\int  \<X_e, \nu \> \sigma_2 (\kappa).
\end{eqnarray}
Here $\kappa=(\kappa_1, \kappa_2,\cdots, \kappa_n)$ are principal curvatures of $\Sigma$ and $\sigma_2(\kappa)$ is the 2nd order mean curvature.
From these formulas, one can show that flow \eqref{flow} preserves the volume of $\Omega_t$ and decreases the area of $\Sigma_t$. See Proposition \ref{pro3.3}. These are crucial properties of this flow.

To prove Theorem \ref{mainthm}, we first transform the flow equation to a scalar flow \eqref{flow2} on $\ss^n_+$ by using star-shapedness. By using the M\"obius transformation between the half space $\bar \rr^{n+1}_+$ and the unit ball $\bar \bb^{n+1}$, a star-shaped hypersurface in $\bar \bb^{n+1}$ is equivalent to a classical star-shaped hypersurface in $\bar \rr^{n+1}_+$ with a conformal flat metric. We remark that a different reparametrization based on M\"obius transformation between round cylinder and $\bb^{n+1}$ was used by Lambert-Scheuer \cite{LS}. For the scalar flow \eqref{flow2}, the $C^0$ estimate follows directly from the barrier argument. We then show the gradient estimate for \eqref{flow2}.

Finally we mention some previous results on curvature flows with free boundary in $\bb^{n+1}$. The classical mean curvature flow was considered by Stahl \cite{St1, St2}, where it was shown that strictly convex initial data are driven to a round point in a finite time. The classical inverse mean curvature flow was treated by Lambert-Scheuer \cite{LS}, where it was shown that strictly convex initial data are driven to a flat perpendicular $n$-ball in a finite time. 
Following a similar idea of this paper, a fully nonlinear inverse curvature type flow was considered by Scheuer and the authors \cite{SWX} to show a class of new Alexandrov-Fenchel's inequalities for convex free boundary hypersurfaces in $\bb^{n+1}$.

The rest of this paper is organized as follows. In Section 2 we introduce the M\"obius transformation between $\bar \rr^n_+$ and $\bar \bb^n$, and reduce flow \eqref{flow} to a scalar flow \eqref{flow2}, provided that all  evolving hypersurfaces are star-shaped. In Section 3, we show that $C^0$ and $C^1$ estimates of \eqref{flow}.
As consequence, we prove in Section 4 that the global convergence of \eqref{flow}, Theorem \ref{mainthm} and  its consequence, Corollary \ref{coro1}.

\

\section{A scalar flow}

In this section we reduce \eqref{flow} to a scalar flow, provided that all evolving hypersurfaces are star-shaped.

Without loss of generality, from now on, we assume $e=E_{n+1}$, the $(n+1)$-coordinate vector.
Let $$\rr^{n+1}_+=\{z=(z_1,\cdots, z_{n+1})\in \rr^{n+1}: z_{n+1}> 0\}$$ be the half space.
Define
\begin{eqnarray}
f: \bar \rr^{n+1}_+&\to&\bar \bb^{n+1},\\
(z', z_{n+1})&\mapsto&\left(\frac{2z'}{|z'|^2+(1+z_{n+1})^2},\quad \frac{|z|^2-1}{|z'|^2+(1+z_{n+1})^2}\right).
\end{eqnarray}
Here $z'=(z_1,\cdots, z_{n})\in \mathbb{R}^{n}$. $f$ is bijective and \begin{eqnarray}
&&f(\rr^{n+1}_+)=\bb^{n+1},\\&&f(\p \rr^{n+1}_+)=\p \bb^{n+1}, \\&&f(\{|z|=1\})=\{x_{n+1}=0\}.
\end{eqnarray}
Moreover, $f$ is a conformal diffeomorphism between $(\bar \rr^{n+1}_+, \delta_{\bar \rr^{n+1}_+})$ and $(\bar \bb^{n+1}, \delta_{\bar \bb})$. Here $\delta_{\bar \rr^{n+1}_+}$ and $\delta_{\bar \bb}$ denote the restriction of the Euclidean metric to $\bar \rr^{n+1}_+$ and $\bar \bb^{n+1}$ respectively. Precisely, $$f^*\delta_{\bar \bb}= e^{2w}\delta_{\bar \rr^{n+1}_+}= \frac{4}{(|z'|^2+(1+z_{n+1})^2)^2}\delta_{\bar \rr^{n+1}_+}.$$
In other words, $(\bar \bb^{n+1}, \delta_{\bar \bb})$ and $(\bar \rr^{n+1}_+, e^{2w}\delta_{\bar \rr^{n+1}_+})$ are isometric.

In $\bar \rr^{n+1}_+$, we use the polar coordinates $(\rho,\varphi, \theta)\in [0,\infty)\times [0,\frac{\pi}{2}]\times \ss^{n-1}$, where  $$\rho^2=|z'|^2+z_{n+1}^2, \quad z_{n+1}=\rho\cos\varphi$$ and $\theta\in  \ss^{n-1}$ is the spherical coordinate.

By using  $(\rho,\varphi, \theta)$ in $\bar \rr^{n+1}_+$,  the mapping $f$ can be rewritten as
\begin{eqnarray}\label{conf-polar}
f(\rho,\varphi, \theta)=\left(\frac{2\rho\sin \varphi \vec{\theta}}{1+\rho^2+2\rho\cos \varphi}, \frac{\rho^2-1}{1+\rho^2+2\rho\cos\varphi}\right).
\end{eqnarray}
Here $ \vec{\theta}$ denotes the position vector of the point $\frac{z'}{|z'|}\in \ss^{n-1}$.
We also have
$$f^*\delta_{\bar \bb}=e^{2w} \delta_{\bar \rr^{n+1}_+}=\frac{4}{(1+\rho^2+2\rho\cos\varphi)^2}(d\rho^2+\rho^2d\varphi^2+\rho^2\sin^2\varphi g_{\ss^{n-1}}),$$
where $$w=w(\rho, \varphi, \theta)=\log 2-\log (1+\rho^2+2\rho\cos\varphi).$$ 
One may also check that the conformal Killing vector field $X_{n+1}$ on $\bar \bb_+$ is transformed to \begin{eqnarray}\label{tildeX}
\tilde{X}=(f^{-1})_*(X_{n+1})=-\rho\p_\rho\hbox{ on } \bar \rr^{n+1}_+.
\end{eqnarray}
The integral curves of $\tilde{X}$ are clearly the rays  in $\rr^{n+1}_+$ initiating from the origin.

\

Let $\S\subset  \bar \bb^{n+1}$ be a properly embedded compact hypersurface with boundary, given by an embedding $x:\bar \ss^n_+\to \bar \bb^{n+1}$.  
We associate $\S$ with a corresponding hypersurface $\tilde{\S}\subset \bar \rr^{n+1}_+$ given 
by the embedding
$$\tilde{x}= f^{-1}\circ x: \bar \ss^n_+\to \bar \rr^{n+1}_+.$$
In view of \eqref{tildeX}, $\S$ is star-shaped with respect to $E_{n+1}$ if and only if $\tilde{\S}$ is star-shaped (with respect to the origin) in $\bar \rr^{n+1}_+$, that is, $\tilde{\S}$ intersects each of the rays in $\rr^{n+1}_+$ initiating from the origin exactly once, or in other words, $\tilde{\S}$ is a graph over $\bar \ss^n_+$.

Since $(\bar \bb^{n+1}, \delta_{\bar \bb})$ and $(\bar \rr^{n+1}_+, e^{2w}\delta_{\bar \rr^{n+1}_+})$ are isometric, 
a proper embedding $x:\bar \ss^n_+\to \bar \bb^{n+1}$ can be identified as an embedding $\tilde{x}: \bar \ss^n_+\to (\bar \rr^{n+1}_+, e^{2w}\delta_{\bar \rr^{n+1}_+}).$
In the following, we use $\tilde{}$ to indicate the corresponding quantity for $\tilde{x}: \bar \ss^n_+\to(\bar \rr^{n+1}_+, e^{2w}\delta_{\bar \rr^{n+1}_+})$.

Given a star-shaped hyersurface $\tilde{\S}$ in $(\bar \rr^{n+1}_+, e^{2w}\delta_{\bar \rr^{n+1}_+})$, by using the polar coordinate $(\rho, \varphi, \theta)\in \bar \rr^{n+1}_+$, 
we may write $$\tilde{x}=\rho(y)y=\rho(\varphi, \theta) y, y= (\varphi, \theta)\in \bar \ss^n_+.$$ 
We use $\s=d\varphi^2+\sin^2\varphi d\theta^2$ and $\nabla^\s$ to denote the round metric and the covariant derivative on $\bar \ss^n_+$. Set $$\g=\log \rho, \hbox{ and }v=\sqrt{1+|\n^\s \g|^2}.$$

We have the following correspondence for several geometric quantities. 
\begin{prop}\label{corresp}
\begin{itemize}\
\item[(i)]$$x_{n+1}=\<f(\tilde{x}), E_{n+1}\>=\frac12(\rho^2-1)e^{w}.$$
\item[(ii)]$$|X_{n+1}|=e^w|-\rho\p_\rho|=\rho e^w.$$
\item[(iii)]$$\<X_{n+1},\nu\>=e^{2w}\<-\rho\p_\rho,\tilde{\nu}\>=\frac{\rho e^w}{v}.$$
\item[(iv)]The Weingarten transformation $h_i^j= g^{jk}h_{ik}$ satisfies
\begin{eqnarray*}
h_i^j=\tilde{h}_i^j
&=&\frac{1}{\rho v e^w}(\s^{kj}-\frac{\g^k\g^j}{v^2})\g_{ik}+\left[\frac{\sin\varphi\g_\varphi}{v}+\frac{(\rho^2-1)}{2\rho v}\right]\delta_i^j.
\end{eqnarray*}
\item[(v)]\begin{eqnarray*}
H=\tilde{H}
&=&\frac{1}{\rho v e^w}(\s^{ij}-\frac{\g^i\g^j}{v^2})\g_{ij}+\frac{n\sin\varphi\g_\varphi}{v}+\frac{n(\rho^2-1)}{2\rho v}.
\end{eqnarray*}\end{itemize}
\end{prop}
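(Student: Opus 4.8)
The plan is to carry out a direct conformal-geometry computation. The starting point is that $f$ is a conformal diffeomorphism with $f^*\delta_{\bar\bb} = e^{2w}\delta_{\bar\rr^{n+1}_+}$, so an embedded hypersurface $\S \subset \bar\bb^{n+1}$ and its preimage $\tilde\S = f^{-1}(\S) \subset (\bar\rr^{n+1}_+, e^{2w}\delta)$ are isometric, and all intrinsic quantities agree; the only subtlety is how the \emph{extrinsic} data (normal, second fundamental form, mean curvature) transform, first under the conformal change $\delta \mapsto e^{2w}\delta$, and second when we pass to the graph representation $\tilde x = \rho(\varphi,\theta)\,y$ over $\bar\ss^n_+$.

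First I would record the two elementary identities (i) and (ii). For (i) one just plugs the polar form \eqref{conf-polar} into $\langle f(\tilde x), E_{n+1}\rangle = \frac{\rho^2-1}{1+\rho^2+2\rho\cos\varphi}$ and recognizes $e^{w} = \frac{2}{1+\rho^2+2\rho\cos\varphi}$, giving $x_{n+1} = \frac12(\rho^2-1)e^{w}$. For (ii), since $\tilde X = (f^{-1})_*X_{n+1} = -\rho\,\partial_\rho$ from \eqref{tildeX} and $f$ is a conformal isometry onto $(\bar\bb^{n+1},\delta_{\bar\bb})$, the length is $|X_{n+1}|_{\delta_{\bar\bb}} = |-\rho\partial_\rho|_{e^{2w}\delta} = e^{w}\rho$ because $|\partial_\rho|_\delta = 1$.

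The core is the graph computation, done in the flat metric $\delta$ first and then corrected by the conformal factor. Writing $\tilde x = e^{\g(\varphi,\theta)}y$ with $y \in \bar\ss^n_+$ and $v = \sqrt{1+|\n^\s\g|^2}$, the standard formula for a radial graph in $\rr^{n+1}$ gives the $\delta$-unit normal proportional to $y - \n^\s\g$ (suitably interpreted), hence $\langle -\rho\partial_\rho, \tilde\nu_\delta\rangle_\delta = \rho/v$. Under the conformal change $g = e^{2w}\delta$ the unit normal rescales as $\nu_g = e^{-w}\nu_\delta$, so $\langle -\rho\partial_\rho, \tilde\nu\rangle_{e^{2w}\delta} = e^{2w}\langle -\rho\partial_\rho, e^{-w}\tilde\nu_\delta\rangle_\delta = e^{w}\rho/v$, which is (iii); note this is consistent with (ii) since $\langle X_{n+1},\nu\rangle = |X_{n+1}|\cos\angle$. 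For (iv) and (v) I would use the conformal transformation law for the second fundamental form: if $g = e^{2w}\delta$ then the Weingarten operator transforms as
$$
h^j_i[g] = e^{-w}\big(h^j_i[\delta] + \tilde\nu(w)\,\delta^j_i\big),
$$
where $\tilde\nu(w) = \langle\n w, \tilde\nu_\delta\rangle_\delta$. The flat-space Weingarten operator of the radial graph $e^\g y$ over $\ss^n$ is the classical expression
$$
h^j_i[\delta] = \frac{1}{\rho v}\Big(\delta^j_i - \frac{1}{v^2}\big(\s^{jk} - \tfrac{\g^j\g^k}{v^2}\big)\text{(Hessian terms)}\Big),
$$
more precisely $\rho\, h^j_i[\delta] = \frac1v\big(\delta^j_i + \frac{\g^j\g_i}{v^2}\big) - \frac{1}{v}(\s^{jk}-\frac{\g^j\g^k}{v^2})\n^\s_k\n^\s_i\g \cdot \frac1\rho$ — I would import this from the literature on star-shaped hypersurfaces rather than rederive it. Combining with $\tilde\nu(w)$, computed from $w = \log 2 - \log(1+\rho^2+2\rho\cos\varphi)$ together with $\partial_\rho w$ and $\partial_\varphi w$ and the explicit normal direction, produces the extra term $\big[\frac{\sin\varphi\,\g_\varphi}{v} + \frac{\rho^2-1}{2\rho v}\big]\delta^j_i$ after simplification; tracing over $i=j$ gives (v).

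The main obstacle is bookkeeping in step (iv): one must carefully match the abstract index placement in the radial-graph Weingarten formula (which lives on $(\bar\ss^n_+,\s)$ with the induced metric $g_{ij} = e^{2\g}(\s_{ij} + \g_i\g_j)$ up to the conformal factor $e^{2w}$) against the conformal correction, and in particular compute $\tilde\nu_\delta(w)$ cleanly. The key simplification is that $w$ depends only on $\rho$ and $\varphi$, so $\tilde\nu_\delta(w) = \frac{1}{v}\big(\partial_\rho w \cdot \rho + \partial_\varphi w \cdot (-\g_\varphi)\big)/\rho$ schematically; substituting $\partial_\rho w = -\frac{2\rho+2\cos\varphi}{1+\rho^2+2\rho\cos\varphi}$ and $\partial_\varphi w = \frac{2\rho\sin\varphi}{1+\rho^2+2\rho\cos\varphi}$ and using $e^w = \frac{2}{1+\rho^2+2\rho\cos\varphi}$ collapses everything to the stated $\frac{\sin\varphi\,\g_\varphi}{v} + \frac{\rho^2-1}{2\rho v}$. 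Everything else is routine once this identity is in hand.
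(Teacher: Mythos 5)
Your proposal follows essentially the same route as the paper's proof: (i)--(iii) by direct substitution and the conformal rescaling $\tilde\nu=e^{-w}\nu_\delta$ of the radial-graph normal, and (iv)--(v) via the conformal transformation law $\tilde h_i^j=e^{-w}\big((h_\delta)_i^j+\nabla^\delta_{\nu_\delta}w\,\delta_i^j\big)$ combined with the standard flat-space Weingarten operator of a radial graph and the explicit computation of the normal derivative of $w$, which is exactly how the paper obtains the extra term $\frac{\sin\varphi\,\g_\varphi}{v}+\frac{\rho^2-1}{2\rho v}$. The only blemish is that the flat radial-graph Weingarten formula you quote from memory is mis-transcribed (the correct one, as used in the paper, is $(h_\delta)_i^j=\pm\frac{1}{\rho v}\big(\delta_i^j-(\s^{kj}-\frac{\g^k\g^j}{v^2})\g_{ik}\big)$ depending on the choice of normal), so you would need to fix that import and the orientation bookkeeping, but this does not affect the validity of the approach.
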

\begin{remark}We see from (iii) that in case we have $C^0$ estimate, a positive lower bound for $\<X_{n+1},\nu\>$ is equivalent to the gradient estimate for $\g$.
\end{remark}
\begin{proof}(i) follows from  \eqref{conf-polar} and (ii) follows from \eqref{tildeX}.

It is clear that the unit outward normal is given by \begin{eqnarray}\label{normal}
\tilde{\nu}=e^{-w}\nu_\delta=e^{-w}\frac{\rho^{-1}\n^\s \g-\p_\rho}{v},
\end{eqnarray}
where $\nu_\delta$ is the unit outward normal of $\tilde{\S}\subset (\bar \rr^{n+1}_+, \delta_{\bar \rr^{n+1}_+})$.
Then (iii) follows from \eqref{tildeX} and \eqref{normal}.

By a well-known transformation law for the Weingarten transformation under a conformal change,  we know that $\tilde{h}_i^j$ of $\S\subset (\bar \rr^{n+1}_+, e^{2w}\delta_{\bar \rr^{n+1}_+})$ with respect to $-\tilde{\nu}$ is given by
\begin{eqnarray}\label{mean-curv}
\tilde{h}_i^j&=&e^{-w}((h_\delta)_{i}^j+\nabla^\d_{\nu_\d}w \delta_i^j),
\end{eqnarray}
where $(h_\delta)_{i}^j$ is the Weingarten transformation with respect to $-\nu_\d$ of $\tilde{\S}\subset (\bar \rr^{n+1}_+, \delta_{\bar \rr^{n+1}_+})$ and  $\nabla^\d$ is the Euclidean derivative.

It is known that \begin{eqnarray}\label{mean-curv-E}
(h_\delta)_{i}^j=-\frac{1}{\rho v}\delta_i^j+\frac{1}{\rho v}(\s^{kj}-\frac{\g^k\g^j}{v^2})\g_{ik},
\end{eqnarray}
On the other hand, using $e^{-w}=\frac12(1+\rho^2+2\rho\cos\varphi)$, we have
\begin{eqnarray}\label{normal-der}
\nabla^\d_{\nu_\d}(e^{-w})&=&\left\<(\rho+\cos \varphi)\p_\rho-\rho^{-1} \sin\varphi\p_\varphi,\frac{\rho^{-1}\n^\s \g-\p_\rho}{v}\right\>
\\&=& -\frac{1}{v}(\rho+\cos\varphi+\sin\varphi\g_\varphi).\nonumber
\end{eqnarray}
(iv) follows from \eqref{mean-curv}, \eqref{mean-curv-E}  and \eqref{normal-der}. (v) follows from (iv) by taking trace.
\end{proof}

We return to the flow problem \eqref{flow} in $(\bar \bb^{n+1}, \delta_{\bar \bb})$. By the identification using $f$,  the corresponding family of embeddings $\tilde{x}: \ss^n_+\to (\bar \rr^{n+1}_+, e^{2w}\delta_{\bar \rr^{n+1}_+})$ satisfies
\begin{equation}\label{flow1}
\left\{
\begin{array}{rcll}
\p_t \tilde{x}&=&(n \<f(\tilde{x}), E_{n+1}\> - \tilde{H} e^{2w}\<-\rho\p_\rho, \tilde{\nu}\>)\tilde{\nu}&\hbox{ in }\ss_+^n\times [0, T),\\
\<\tilde{\nu}, \tilde{\mu}\circ \tilde{x}\>&=&0,&\hbox{ on } \p \ss_+^n\times [0, T),
\end{array}
\right.
\end{equation}
with an initial surface $\tilde{x}(\cdot, 0)=\tilde{x}_0$. 
Here $\tilde{\mu}$ is the downward unit normal of $(\bar \rr^{n+1}_+, e^{2w}\delta_{\bar \rr^{n+1}_+})$.
As long as $\tilde{x}(\cdot, t)$ is star-shaped in $\bar \rr^{n+1}_+$, we may reduce \eqref{flow1} to a scalar flow.

Using a standard argument (see \cite{Ge}, Eq. (2.4.21)) and Proposition \ref{corresp}, we see that 
\begin{eqnarray}\label{flow-scalar}
\p_t \g&=& -\frac{v}{\rho e^w} \left(\frac{n}{2}(\rho^2-1)e^{w}- \tilde{H} \frac{\rho e^w}{v}\right)\\
&=&\frac{1}{\rho v e^w}\left(\s^{ij}-\frac{\g^i\g^j}{v^2}\right)\g_{ij}+\frac{n\sin\varphi\g_\varphi}{v}-\frac{n(\rho^2-1)|\n^\s \g|^2}{2\rho v}\nonumber
\\&=&\div_{\s}\left(\frac{\n^\s \g}{\rho v e^w}\right)-\frac{n+1}{v}\s\left(\n^\s \g, \n^\s\left(\frac{1}{\rho e^w}\right)\right).\nonumber\end{eqnarray}
The last line above follows from the fact 
\begin{eqnarray*}
\s\left(\n^\s \g, \n^\s\left(\frac{1}{\rho e^w}\right)\right)=\frac{\rho^2-1}{2\rho}|\n^\s \g|^2-\sin\varphi\g_\varphi.
\end{eqnarray*}

Next we examine the boundary condition. Note  that $\mu\perp \p \bb^{n+1}$. 
Since the conformal change $f$ preserves angles, we have $\tilde{\mu}\perp \p \rr^{n+1}_+$ and in turn $$\tilde{\mu}=-e^{-w}\p_\varphi.$$
In view of \eqref{normal}, the boundary condition in \eqref{flow1} reduces to
\begin{eqnarray}
\nabla^\s_{\p_\varphi}\g= 0 \hbox{ on }\p \ss^n_+.
\end{eqnarray}

In summary, the flow problem \eqref{flow1} reduces to solve the scalar PDE  
\begin{equation}\label{flow2}
\p_t\g=\frac{1}{\rho v e^w}\left(\s^{ij}-\frac{\g^i\g^j}{v^2}\right)\g_{ij}+\frac{n\sin\varphi\g_\varphi}{v}-\frac{n(\rho^2-1)|\n^\s \g|^2}{2\rho v},\quad \hbox{ in }\ss_+^n\times [0, T),
\end{equation}
with the initial and the boundary conditions 
\begin{eqnarray*}
\g(\cdot, 0)&=&\g_0,\quad   \hbox{ in }\ss^n_+,\\
\nabla^\s_{\p_\varphi}\g&=& 0,\quad \,\, \hbox{ on } \p \ss_+^n\times [0, T).
\end{eqnarray*}
where $\g_0$ is the corresponding function for $x_0$.

\

\section{A priori estimates}
The short time existence of  the scalar flow \eqref{flow2} follows by the standard parabolic PDE theory. Next  we show the $C^0$ and $C^1$ estimates for \eqref{flow2}. The a priori $C^0$ estimate follows directly from the maximum principle.
\begin{prop}\label{C0}
Let $\g: \ss_+^n\times [0, T)\to \rr$ solve \eqref{flow2}. Then 
$$\min_{\ss^n_+} \g_0\le \g\le \max_{\ss^n_+} \g_0.$$
\end{prop}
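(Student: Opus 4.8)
The plan is to apply the parabolic maximum principle to the scalar flow \eqref{flow2}, treating it as a quasilinear parabolic equation for $\g$ on the compact manifold-with-boundary $\bar\ss^n_+$, with the Neumann boundary condition $\nabla^\s_{\p_\varphi}\g=0$. The key observation is that the right-hand side of \eqref{flow2} vanishes when $\g$ is constant: if $\g\equiv c$, then $\n^\s\g=0$, hence $v=1$, all derivatives $\g_{ij}$ and $\g_\varphi$ vanish, and the quadratic term $|\n^\s\g|^2$ vanishes, so every constant is a stationary solution. This makes constants natural barriers.

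First I would set $\g^+ := \max_{\ss^n_+}\g_0$ and consider $\psi := \g - \g^+$. The function $\psi$ satisfies a parabolic equation whose right-hand side is of the form $a^{ij}(\varphi,\theta,\n^\s\g)\,\g_{ij} + b(\varphi,\theta,\n^\s\g)$ where, crucially, the zeroth-order behavior is controlled: writing the RHS as $F(\varphi,\theta,\n^\s\g,\n^{\s,2}\g)$, one checks that $F$ is linear-homogeneous in the second derivatives and that the first-order terms all carry at least one factor of $\n^\s\g$ (the term $\frac{n\sin\varphi\,\g_\varphi}{v}$ is linear in $\n^\s\g$, and $\frac{n(\rho^2-1)|\n^\s\g|^2}{2\rho v}$ is quadratic). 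Hence at an interior spatial maximum of $\psi$ at a fixed time, we have $\n^\s\g=0$ and $\n^{\s,2}\g\le 0$, and the matrix $(\s^{ij}-\g^i\g^j/v^2)=\s^{ij}$ is positive definite, so $\p_t\psi\le 0$ there. At a boundary maximum, the Hopf-type boundary point lemma together with the Neumann condition $\nabla^\s_{\p_\varphi}\g=\nabla^\s_{\p_\varphi}\psi=0$ rules out the maximum migrating to $\p\ss^n_+$ in the usual way. A standard maximum principle argument (comparing with $\g^++\e t$ and letting $\e\to0$, or directly invoking the parabolic maximum principle for Neumann problems) then yields $\g\le\g^+$ for all $t\in[0,T)$. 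The lower bound $\g\ge\min_{\ss^n_+}\g_0$ follows identically by applying the same reasoning to $-\g$, or to $\g - \min\g_0$ at an interior/boundary minimum.

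The one point requiring a little care — and the main (mild) obstacle — is the boundary: one must confirm that the oblique/Neumann condition $\nabla^\s_{\p_\varphi}\g=0$ is compatible with the maximum principle, i.e.\ that the conormal direction $\p_\varphi$ is genuinely transversal (indeed orthogonal) to $\p\ss^n_+=\{\varphi=\pi/2\}$, so that the Hopf lemma applies; this is immediate from the product structure of the round metric $\s=d\varphi^2+\sin^2\varphi\,d\theta^2$. With that in hand the estimate is a routine application of the parabolic comparison principle, using constants as sub- and super-solutions, and no genuine difficulty arises. I would also note in passing that this $C^0$ bound translates, via $\g=\log\rho$, into the two-sided bound $\min\rho_0\le\rho(\cdot,t)\le\max\rho_0$ on the radial function of $\tilde\S_t$, which is the form used later.
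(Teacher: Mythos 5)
Your argument is correct and is exactly the route the paper takes: the paper states only that the $C^0$ bound ``follows directly from the maximum principle,'' and your write-up supplies the standard details (constants are stationary solutions since every term in \eqref{flow2} carries a derivative of $\g$, the coefficient matrix $\s^{ij}-\g^i\g^j/v^2$ is positive definite, and the Neumann condition $\nabla^\s_{\p_\varphi}\g=0$ handles boundary maxima, either via Hopf's lemma or by noting that a boundary maximum with vanishing conormal derivative satisfies the same first- and second-order conditions as an interior one). No discrepancy with the paper's (implicit) proof.
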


The key point is the following gradient estimate for $\g$.
\begin{prop}\label{C1}
Let $\g: \ss_+^n\times [0, T)\to \rr$ solve \eqref{flow2}. Then there exists a constant $C$, depending on $\|\g_0\|_{C^1}$ and $\min_{\ss^n_+} \g_0$ such that
$$|\nabla^\s \g|^2\le C.$$
Moreover, if $n\ge 3$, we have 
$$|\nabla^\s \g|^2\le C_1 e^{-C_2 t}.$$
\end{prop}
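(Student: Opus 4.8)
The plan is a maximum principle argument for the quantity $v = \sqrt{1+|\nabla^\s\gamma|^2}$, or equivalently for $|\nabla^\s\gamma|^2$, run on the parabolic domain $\ss^n_+\times[0,T)$. First I would use the $C^0$ estimate (Proposition \ref{C0}) to fix, once and for all, two-sided bounds on $\rho = e^\gamma$ and hence on $e^w = 2/(1+\rho^2+2\rho\cos\varphi)$ and on all the coefficient functions appearing in \eqref{flow2}; these bounds depend only on $\min\gamma_0$, $\max\gamma_0$. Then I would differentiate the scalar equation \eqref{flow2}: writing the right-hand side as $F(\varphi,\theta,\gamma,\nabla^\s\gamma,\nabla^{\s,2}\gamma)$, the function $w := \tfrac12|\nabla^\s\gamma|^2$ (I'll call it $w$ only informally to avoid clashing with the conformal factor) satisfies a parabolic equation of the form
\begin{equation*}
\p_t w = a^{ij}\nabla^\s_{ij} w + (\text{lower order}) - a^{ij}\nabla^\s_{ik}\gamma\,\nabla^\s_{j}{}^{k}\gamma,
\end{equation*}
obtained by applying $\nabla^\s_k$ to \eqref{flow2}, contracting with $\nabla^{\s,k}\gamma$, and commuting derivatives on $\ss^n_+$ (this produces curvature terms $\mathrm{Ric}_{\s}(\nabla^\s\gamma,\nabla^\s\gamma) = (n-1)|\nabla^\s\gamma|^2$, which is where the $n\ge 3$ versus $n=2$ dichotomy will enter). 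Here $a^{ij} = \frac{1}{\rho v e^w}(\s^{ij} - \gamma^i\gamma^j/v^2)$ is positive definite with eigenvalues controlled above and below by the $C^0$ bounds; the last, manifestly nonpositive, term is the good Bochner term.

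The main work is the boundary: at a point of $\p\ss^n_+$ where $w$ attains a spatial maximum, one needs $\nabla^\s_{\p_\varphi} w \le 0$ (Hopf), and this must be extracted from the Neumann condition $\nabla^\s_{\p_\varphi}\gamma = 0$ on $\p\ss^n_+$. Differentiating the boundary condition tangentially along $\p\ss^n_+ = \ss^{n-1}$ kills all the mixed terms $\gamma_{\varphi\,a}$ for $a$ a $\theta$-direction; combined with the fact that $\p_\varphi$ is a unit normal to the totally geodesic boundary $\p\ss^n_+$ in $\ss^n$ (so the second fundamental form of $\p\ss^n_+\subset\ss^n$ vanishes), one gets $\nabla^\s_{\p_\varphi}|\nabla^\s\gamma|^2 = 2\gamma_{\varphi\varphi}\gamma_\varphi + (\text{terms with }\gamma_\varphi) = 0$ on the boundary because $\gamma_\varphi = 0$ there. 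Hence $w$ cannot attain an interior-in-time maximum on the boundary, and the maximum principle applies on the closed manifold-with-boundary exactly as in the boundaryless case. I expect this boundary computation — verifying that the Neumann condition is preserved under the flow and that it forces the derivative of $|\nabla^\s\gamma|^2$ across $\p\ss^n_+$ to vanish — to be the only genuinely delicate point; it is the free-boundary analogue of the standard interior gradient estimate and is the reason the flow was set up with this particular vector field $X_e$.

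For the first, qualitative bound $|\nabla^\s\gamma|^2 \le C$, it then suffices to feed the equation for $w$ into the maximum principle: at an interior spatial maximum the second-derivative terms have a sign, the Bochner term is $\le 0$, and the remaining zeroth/first-order terms are bounded by $C(1+w)$ using the $C^0$ estimate and Cauchy--Schwarz (in particular the term $-\frac{n(\rho^2-1)}{2\rho v}|\nabla^\s\gamma|^2$ differentiates to something absorbable); Gronwall on $t\mapsto \max_{\ss^n_+}w(\cdot,t)$ gives a bound in terms of $\|\gamma_0\|_{C^1}$ and $\min\gamma_0$. For the exponential decay when $n\ge 3$, I would redo the computation keeping the Bochner/Ricci term: it contributes $-\,2(n-1)\,a^{ij}\s_{ij}\,\cdot(\text{stuff})$... more precisely the commutation term yields $+2\,\mathrm{Ric}_\s(\nabla^\s\gamma,\nabla^\s\gamma)$ with the \emph{wrong} sign initially, but the Kato-type inequality $a^{ij}\nabla^\s_{ik}\gamma\,\nabla_j{}^k\gamma \ge \frac{1}{n}(\De_\s\gamma)^2/(\rho v e^w) + \cdots$ together with the $C^0$ control shows the good quadratic term dominates; once $|\nabla^\s\gamma|$ is already known to be small after some time $t_0$ (which follows because area is monotone and bounded below, so the flow subconverges), the evolution inequality becomes $\p_t w \le -c\,w$ for $n\ge 3$, giving $w \le C_1 e^{-C_2 t}$. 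The case $n=2$ is genuinely borderline — the Ricci term is too weak — which is exactly why Theorem \ref{mainthm} only claims exponential convergence for $n=2$ away from the half-ball volume, and I would not attempt to push the gradient estimate itself to exponential decay when $n=2$.
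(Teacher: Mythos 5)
Your skeleton coincides with the paper's: evolve $|\n^\s\g|^2$, note that differentiating the Neumann condition $\g_\varphi=0$ tangentially along the totally geodesic $\p\ss^n_+$ gives $\nabla^\s_{\p_\varphi}|\n^\s\g|^2=0$ there, so boundary maximum points can be treated like interior ones. That part is fine. But the quantitative core of your argument has three genuine problems. First, your route to the uniform bound fails: estimating the lower-order terms by $C(1+w)$ and applying Gronwall only yields $\max|\n^\s\g|^2\le Ce^{Ct}$, whereas the Proposition asserts a constant depending only on $\|\g_0\|_{C^1}$ and $\min\g_0$, uniformly in $t$ (and this uniformity is what is actually needed later, e.g.\ for the lower bound on $\<X_{n+1},\nu\>$ and the $n=2$ convergence argument). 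The mechanism that produces a time-independent bound is the favorable quartic term coming from $F^\rho$, namely $-\frac{n(\rho^2+1)}{\rho v}|\n^\s\g|^4$: after using $|\n^{\s,2}\g|^2\ge\frac1{n-1}(\De_\s\g)^2$ at the maximum point and completing squares in $\De_\s\g$ to absorb the cross terms $\De_\s\g\,|\n^\s\g|^2$ and $\De_\s\g\,\g_\varphi$, one arrives at $\p_t\max|\n^\s\g|^2\le -\frac{n\rho e^{w}}{v}|\n^\s\g|^4+(\tfrac{10}{3}-\tfrac43 n)\frac1{\rho v}|\n^\s\g|^2$, so the maximum can never exceed the larger of its initial value and an explicit threshold. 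You invoke the good quartic structure only later, and not for this step.

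Second, you have the sign of the commutation term backwards: on the round hemisphere the Ricci identity produces the \emph{good} term $-\frac{2(n-1)}{\rho v e^{w}}|\n^\s\g|^2$, not a harmful $+2\,\mathrm{Ric}_\s(\n^\s\g,\n^\s\g)$ that must be rescued by a Kato-type inequality; the Cauchy--Schwarz inequality $|\n^{\s,2}\g|^2\ge\frac1{n-1}(\De_\s\g)^2$ is used to control the $\De_\s\g$ cross terms, not to repair a Ricci sign. The $n\ge3$ versus $n=2$ dichotomy is then the competition between this $-2(n-1)$ coefficient and the positive contributions $2n\cos\varphi\,\g_\varphi^2$ plus the $\epsilon$-absorption term, which after the explicit computation gives the coefficient $\tfrac{10}{3}-\tfrac43 n$, negative exactly when $n\ge3$. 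Third, your exponential decay argument for $n\ge3$ is circular: you use ``the flow subconverges because area is monotone and bounded below'' to get smallness of the gradient after some time, but subconvergence presupposes long-time existence and uniform estimates, which is precisely what this Proposition is supposed to provide (and the area monotonicity controls $\int\sum(\kappa_i-\kappa_j)^2\<X_{n+1},\nu\>$, not the gradient directly). No smallness is needed: for $n\ge3$ the differential inequality above already has a strictly negative coefficient on the $|\n^\s\g|^2$ term, so $\p_t\max|\n^\s\g|^2\le -c\max|\n^\s\g|^2$ and the decay is immediate.
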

\begin{proof}For notation simplicity, we use $\nabla=\nabla^\s$ in the proof. Denote
$$F(\nabla^2 \g, \n\g, \rho, \varphi)=\frac{1}{\rho v e^w}\left(\s^{ij}-\frac{\g^i\g^j}{v^2}\right)\g_{ij}+\frac{n\sin\varphi\g_\varphi}{v}-\frac{n(\rho^2-1)|\n \g|^2}{2\rho v},$$
and $$F^{ij}=\frac{\p F}{\p \g_{ij}}, \quad F^{p}=\frac{\p F}{\p \g_p}, \quad F^\rho=\frac{\p F}{\p \rho}, \quad F^\varphi=\frac{\p F}{\p \varphi}.$$
Then 
\begin{eqnarray}\label{grad-eq1}
&&\p_t|\n \g|^2=2\g_k(\g_t)_k=2F^{ij}\g_k\g_{ijk}+ F^p\n_p |\n \g|^2 +2F^\rho \rho|\n\g|^2+2F^\varphi\g_\varphi.
\end{eqnarray}

By a direct computation, we have
\begin{eqnarray}
&&F^{ij}=\frac{1}{\rho v e^w}\left(\s^{ij}-\frac{\g^i\g^j}{v^2}\right),\label{grad-eq2}\\
&&F^\rho=\frac{\rho^2-1}{2\rho^2 v}\left(\s^{ij}-\frac{\g^i\g^j}{v^2}\right)\g_{ij} -\frac{n(\rho^2+1)}{2\rho^2 v}|\n \g|^2,\label{grad-eq3}\\
&&F^\varphi=-\sin\varphi\frac{1}{v}\left(\s^{ij}-\frac{\g^i\g^j}{v^2}\right)\g_{ij} +\frac{n\cos\varphi}{v}\g_\varphi.\label{grad-eq4}
\end{eqnarray}
Using the Ricci identity $$\g_{ijk}=\g_{kij}+\g_j \s_{ki}- \g_k \s_{ij}$$ and \eqref{grad-eq2}, we have
\begin{eqnarray}\label{grad-eq5}
2F^{ij}\g_k\g_{ijk}&=& F^{ij} \n^2_{ij}|\n \g|^2- 2 \frac{1}{\rho v e^w}\left(\s^{ij}-\frac{\g^i\g^j}{v^2}\right)\g_{ik}\g_{jk}-\frac{2(n-1)}{\rho v e^w}|\n\g|^2\nonumber
\\&=& F^{ij} \n^2_{ij}|\n \g|^2-  \frac{2}{\rho v e^w}|\n^2 \g|^2 +\frac{1}{2\rho v^3 e^w}\left|\n|\n \g|^2\right|^2-\frac{2(n-1)}{\rho v e^w}|\n\g|^2.
\end{eqnarray}
Replacing \eqref{grad-eq3}, \eqref{grad-eq4} and \eqref{grad-eq5} into \eqref{grad-eq1}, we get
\begin{eqnarray}\label{eq-grad1}
\p_t|\n \g|^2&= & F^{ij}\n^2_{ij}|\n \g|^2+F^p \n_p |\n \g|^2\nonumber\\&&-\frac{2}{\rho v e^w}|\n^2 \g|^2+\frac{1}{2\rho v^3 e^w}\left|\n|\n \g|^2\right|^2-\frac{2(n-1)}{\rho v e^w}|\n\g|^2\nonumber
\\&&+2 \left[\frac{\rho^2-1}{2\rho^2 v}\left(\s^{ij}-\frac{\g^i\g^j}{v^2}\right)\g_{ij} -\frac{n(\rho^2+1)}{2\rho^2 v}|\n \g|^2\right]\rho|\n\g|^2\nonumber
\\&&+2\left[-\sin\varphi\frac{1}{v}\left(\s^{ij}-\frac{\g^i\g^j}{v^2}\right)\g_{ij} +\frac{n\cos\varphi}{v}\g_\varphi\right]\g_\varphi\nonumber
\\&=& F^{ij}\n^2_{ij}|\n \g|^2+F^p \n_p |\n \g|^2+\left(\sin\varphi-\frac{\rho^2-1}{2\rho }|\n\g|^2\right)\frac{\<\n\g, \n|\n\g|^2\>}{v^3}\nonumber
\\&&-\frac{2}{\rho v e^w}|\n^2 \g|^2+\frac{1}{2\rho v^3 e^w}\left|\n|\n \g|^2\right|^2-\frac{2(n-1)}{\rho v e^w}|\n\g|^2\nonumber
\\&&+\frac{\rho^2-1}{\rho v}\Delta \g |\n\g|^2-\frac{n(\rho^2+1)}{\rho v}|\n\g|^4+\frac{2n\cos\varphi}{v}\g_\varphi^2-\frac{2\sin\varphi}{v}\Delta \g \g_\varphi.
\end{eqnarray}
Now we examine the boundary normal derivative of  $|\n\g|^2$ and have
\begin{eqnarray}\label{bdry}
&&\nabla_{\p_\varphi}|\n\g|^2=2(\g_{\theta_\a}\g_{\theta_\a\varphi}+\g_{\varphi}\g_{\varphi\varphi})
=\g_{\theta_\a}[\n_{\p_{\theta_\a}} (\g_\varphi)-(\n_{\p_{\theta_\a}}\p_\varphi)\g]
=0.
\end{eqnarray}
Here we used $\g_\varphi=0$ along $\p \ss^n_+$ and the fact that $\n_{\p_{\theta_\a}}\p_\varphi=0$.

Assume for $t\in [0,T)$, $\max_{\bar \ss^{n}_+}|\n \g|^2(\cdot, t)=|\n \g|^2(x_t, t)$. If $x_t\in \ss^n_+$, it follows from the maximum point condition that
\begin{eqnarray}\label{critical}
\n|\n\g|^2=0, \quad \n^2|\n\g|^2\le 0.
\end{eqnarray}
If $x_t\in \p \ss^n_+$, we see from \eqref{bdry} that $\n_{\p_{\varphi}}|\n\g|^2=0$, and in turn we also have \eqref{critical}. Thus, for each $t\in [0,T)$, at $x_t$, we have \eqref{critical}.
We choose at $x_t$ local coordinates ${x^1,\cdots x^n}$ such that $\g_1=|\n\g|$. One has $\g_{1i}=0$ for all $i$ by \eqref{critical}. By further rotating the $\{x^2, \cdots, x^n\}$ coordinate, we can assume $\n^2\g$ is diagonal.
Then 
$$|\n^2 \g|^2\ge \frac{1}{n-1}(\Delta \g)^2.$$
It follows from \eqref{eq-grad1} that at $x_t$,
\begin{eqnarray}\label{eq-grad''}
0\le \p_t |\n \g|^2(x_t, t)\nonumber
&\le & -\frac{2}{\rho v e^w}|\n^2 \g|^2-\frac{2(n-1)}{\rho v e^w}|\n\g|^2\nonumber
\\&&+\frac{\rho^2-1}{\rho v}\Delta \g |\n\g|^2-\frac{n(\rho^2+1)}{\rho v}|\n\g|^4+\frac{2n\cos\varphi}{v}\g_\varphi^2-\frac{2\sin\varphi}{v}\Delta \g \g_\varphi\nonumber
\\ &\le&  -\frac{2(1-\epsilon)}{(n-1) \rho v e^w}\left(\Delta \g- \frac{(n-1)(\rho^2-1)e^w}{4(1-\epsilon)}|\n\g|^2\right)^2\nonumber\\&&
-\frac{2\epsilon}{(n-1) \rho v e^w}\left(\Delta \g+\frac{(n-1)\rho e^w\sin\varphi}{2\epsilon} \g_\varphi\right)^2\nonumber
\\&&+\frac{1}{\rho v}\left(\frac{(n-1)(\rho^2-1)^2e^w}{8(1-\epsilon)}-n(\rho^2+1)\right)|\n \g|^4\nonumber\\&&+
\frac{1}{v}\left(-\frac{2(n-1)}{\rho e^w}|\n\g|^2+2n\cos\varphi\g_\varphi^2+\frac{(n-1)\rho e^w\sin^2\varphi }{2\epsilon} \g_\varphi^2\right).
\end{eqnarray}
Choosing $\epsilon=\frac34$, we have
\begin{eqnarray*}
&&\frac{(n-1)(\rho^2-1)^2e^w}{8(1-\epsilon)}-n(\rho^2+1)\\&<&\frac{ne^w}{2}[(\rho^2-1)^2- (\rho^2+1)(1+\rho^2+2\rho\cos\varphi)]\le - n\rho^2 e^w\end{eqnarray*}
and
\begin{eqnarray*}
&&-\frac{2(n-1)}{\rho e^w}|\n\g|^2+2n\cos\varphi\g_\varphi^2+\frac{(n-1)\rho e^w\sin^2\varphi }{2\epsilon}\g_\varphi^2\\
&\le &\left(-\frac{(n-1)(1+\rho^2+2\rho\cos\varphi)}{\rho}+2n\cos\varphi + \frac{4(n-1)}{3}\frac{\rho}{1+\rho^2+2\rho\cos\varphi}\right)|\n \g|^2\\
&\le &(-2(n-1)+2\cos \varphi+ \frac{2(n-1)}{3})|\n\g|^2
\\&\le&(-\frac43 n+\frac{10}{3})|\n\g|^2. \end{eqnarray*}
Thus
\begin{eqnarray}\label{eq-grad}
0\le \p_t |\n \g|^2 &\le&   - \frac{n\rho e^w}{v}|\n \g|^4 +(-\frac43 n+\frac{10}{3})\frac{1}{\rho v}|\n\g|^2.
\end{eqnarray}
It follows from \eqref{eq-grad} that $|\n\g|^2\le C$. Moreover, when $n\ge 3$, one sees from \eqref{eq-grad} that $|\n\g|^2\le C_1e^{-C_2t}$.
\end{proof}

\section{Global convergence}

We first prove the nice properties of \eqref{flow}, mentioned in the Introduction.

\begin{prop}\label{pro3.3}\label{first-var} Flow \eqref{flow} satisfies
\begin{eqnarray}\frac d {dt}{\rm Vol}(\Omega_t) =0
\end{eqnarray}
and
\begin{eqnarray}
&&\frac{d}{dt}{\rm Area}(\S_t)=-\frac{1}{n-1}\int_{\S} \sum_{i<j} (\kappa_i-\kappa_j)^2\<X_{n+1},\nu\> dA_t \le 0.
\end{eqnarray}
\end{prop}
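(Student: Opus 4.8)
The plan is to read off both identities from the first variation formulas for the enclosed volume and for the boundary area of a free boundary hypersurface, and then to recognize the two Minkowski identities \eqref{eq3} and \eqref{eq4} inside the resulting boundary integrals. Write $\mathcal{F}=n\<x,e\>-H\<X_e,\nu\>$ for the scalar speed, so the flow is $\p_t x=\mathcal{F}\nu$. A preliminary observation I would record is that along $\p M$ the vector $\nu$ is tangent to $\ss^n$ (the free boundary condition), hence $\mathcal{F}\nu$ is tangent to $\ss^n$ there and the flow keeps $\p\Sigma_t$ on $\ss^n$ to first order; since ${\rm Vol}(\Omega_t)$ and ${\rm Area}(\Sigma_t)$ are invariant under tangential reparametrization, the variation may be taken purely normal when differentiating them.

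For the volume, I would use that $\p\Omega_t=\Sigma_t\cup T_t$ with $T_t$ a region of the fixed sphere $\ss^n$; only the motion of $\Sigma_t$ contributes to $\frac{d}{dt}{\rm Vol}(\Omega_t)$, because $T_t$ lies on a fixed hypersurface and its boundary moves tangentially to it. Thus $\frac{d}{dt}{\rm Vol}(\Omega_t)=\int_{\Sigma_t}\<\p_t x,\nu\>\,dA_t=\int_{\Sigma_t}\mathcal{F}\,dA_t=n\int_{\Sigma_t}\<x,e\>\,dA_t-\int_{\Sigma_t}H\<X_e,\nu\>\,dA_t$, which is exactly $0$ by the first Minkowski identity \eqref{eq3}.

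For the area, the first variation under the normal speed $\mathcal{F}\nu$ is $\frac{d}{dt}{\rm Area}(\Sigma_t)=\int_{\Sigma_t}H\mathcal{F}\,dA_t+\int_{\p\Sigma_t}\mathcal{F}\<\nu,\bar\nu\>\,ds$, where $\bar\nu$ is the outward unit conormal of $\p\Sigma_t$ in $\Sigma_t$. The boundary term vanishes: $\bar\nu$ is tangent to $\Sigma_t$ while $\nu$ is normal, and in addition the orthogonal intersection forces $\bar\nu=\mu$ on $\p\Sigma_t$ while the free boundary condition gives $\<\nu,\mu\>=0$. Hence $\frac{d}{dt}{\rm Area}(\Sigma_t)=\int_{\Sigma_t}H\mathcal{F}\,dA_t=n\int_{\Sigma_t}\<x,e\>H\,dA_t-\int_{\Sigma_t}H^2\<X_e,\nu\>\,dA_t$. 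Then I would invoke \eqref{eq4} to rewrite $n\int_{\Sigma_t}\<x,e\>H\,dA_t=\frac{2n}{n-1}\int_{\Sigma_t}\sigma_2(\kappa)\<X_e,\nu\>\,dA_t$, and insert the elementary Newton-type identity
\begin{equation*}
\sum_{i<j}(\kappa_i-\kappa_j)^2=(n-1)\sum_i\kappa_i^2-2\sigma_2(\kappa)=(n-1)H^2-2n\,\sigma_2(\kappa),
\end{equation*}
which follows from $H^2=\sum_i\kappa_i^2+2\sigma_2(\kappa)$. Since $X_e=X_{n+1}$, combining these gives $\frac{d}{dt}{\rm Area}(\Sigma_t)=-\frac{1}{n-1}\int_{\Sigma_t}\sum_{i<j}(\kappa_i-\kappa_j)^2\<X_{n+1},\nu\>\,dA_t$. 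Because the flow preserves star-shapedness, $\<X_{n+1},\nu\>>0$, so the integrand is nonnegative and $\frac{d}{dt}{\rm Area}(\Sigma_t)\le 0$.

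The step I expect to be the main obstacle is the careful justification that no boundary term survives in either first variation formula — everything else is the Newton identity above and a direct substitution of \eqref{eq3} and \eqref{eq4}. This hinges on the orthogonality of $\Sigma_t$ and $\ss^n$ (so $\bar\nu=\mu$ along $\p\Sigma_t$) and on the free boundary condition $\<\nu,\mu\>=0$, together with the remark that $\mathcal{F}\nu$ is already tangent to $\ss^n$ along $\p\Sigma_t$, so the flow is consistent with keeping the free boundary condition and the normal-variation computation of $\frac{d}{dt}{\rm Area}(\Sigma_t)$ is legitimate.
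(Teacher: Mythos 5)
Your proposal is correct and follows essentially the same route as the paper: first variation of enclosed volume and area for the normal speed $n\<x,e\>-H\<X_e,\nu\>$, substitution of the Minkowski formulas \eqref{eq3} and \eqref{eq4}, and the identity $(n-1)H^2-2n\sigma_2(\kappa)=\sum_{i<j}(\kappa_i-\kappa_j)^2$. The extra care you take with the vanishing boundary terms and with $\<X_{n+1},\nu\>>0$ (preserved star-shapedness) is consistent with, and merely more explicit than, the paper's argument.
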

\begin{proof}From  \eqref{eq3}, we get
$$\frac d{dt}{\rm Vol}(\Omega_t) =\int _\Sigma (n x_{n+1}- H\<X_{n+1}, \nu\>)dA_t=0. $$
The first variational formula gives
\begin{eqnarray*}
\frac{d}{dt}{\rm  Area}(\S_t)= \int_{\S} H(nx_{n+1}-H\<X_{n+1},\nu\>)dA_t.
\end{eqnarray*}
Using  the Minkowski formula \eqref{eq4}
\begin{eqnarray*}
 \int_{\S} Hx_{n+1}-\frac{2}{n-1}\s_2(\kappa)\<X_{n+1},\nu\>dA_t=0,
\end{eqnarray*}
we get
\begin{eqnarray*}
\frac{d}{dt}{\rm  Area}(\S_t)&=& -\int_{\S} \left(H^2- \frac{2n}{n-1}\s_2(\kappa)\right)\<X_{n+1},\nu\> dA_t\\
&=&-\frac{1}{n-1}\int_{\S} \sum_{i<j} (\kappa_i-\kappa_j)^2\<X_{n+1},\nu\>)dA_t \le 0.
\end{eqnarray*}

\end{proof}

Now we prove the global convergence.

\

\noindent{\it Proof of Theorem \ref{mainthm}.}
In view of Proposition \ref{corresp} (iii), the $C^0$ and $C^1$ estimates in Propositions \ref{C0} and \ref{C1} imply that $ \<X_{n+1},\nu\>\ge c>0$, that is, the star-shapedness of $\S_t$ is preserved under the flow \eqref{flow}.

Now we are ready to prove the long time existence in Theorem \ref{mainthm}. Since equation \eqref{flow2} is a quasilinear parabolic PDE of divergent form, the higher order a priori estimates follows from the standard 
parabolic PDE theory, once we have the $C^0$ and $C^1$ estimates in Propositions \ref{C0} and \ref{C1}. Hence we prove that \eqref{flow2} has a smooth solution for all time.
The exponential convergence for $n\ge 3$ follows directly from Proposition \ref{C1}.

For the convergence part in two dimensions, we examine the monotonicity of the area functional along the flow.
In the following we restrict to $n=2$. By integrating \eqref{first-var} over $t\in [0,\infty)$ and using the uniform estimate, we get
\begin{eqnarray*}
\int_0^\infty\int_{\ss^n_+}  |\kappa_1(y, t) -\kappa_2(y, t) |^2 \<X_{n+1},\nu\> dA_t dt\le C.
\end{eqnarray*}
where $\kappa_i(y,t)$, $i=1,2$  are the principal curvatures of the radial graph at $(y,t)$.
It follows from the uniform bound for $ \<X_{n+1},\nu\>$ and  $dA_t$ that 
\begin{equation}\label{k}
\max_{y\in\bar \ss^n_+}|\kappa_1-\kappa_2|(y, t)=o_t(1),\end{equation}
where $o_t(1)$ denotes a  quantity which goes to zero as $t\to \infty$.
See the proof of Proposition 5.5 in \cite{GL}. With the help of the property \eqref{k}, we can show the smooth convergence of flow \eqref{flow} when $n=2$. This idea was used first by Guan-Li in \cite{GL}.

Let us go back to the estimate at $x_t$, where $\max_{\bar \ss^{n}_+}|\n \g|^2(\cdot, t)=|\n \g|^2(x_t, t)$. Again we choose the local coordinate around $x_t$ such that at $x_t$, $$\g_1=|\n\g|, \quad \g_{11}=0.$$
In view of Proposition \ref{corresp} (iv), the Weingarten transformation $h_i^j$ is diagonal in this coordinate which means the coordinate directions are the principal directions of $x(\cdot, t)$ at $x_t$. Thus the principal curvature $\kappa_i$ at $x_t$
 is given by
$$\kappa_i=\frac{\g_{ii}}{\rho v e^w}+\frac{\sin\varphi\g_\varphi}{v}+\frac{(\rho^2-1)}{2\rho v},\quad  i=1, 2.$$
It follows that at $x_t$, \begin{eqnarray}\label{add3}
|\Delta \g|=|\g_{22}+\g_{11}|=|\g_{22}-\g_{11}|= \rho ve^w|\kappa_2-\kappa_1|=o_t(1).
\end{eqnarray}
Using \eqref{add3} and the $C^1$ estimate, we get at $(x_t, t)$,
\begin{eqnarray}
\p_t|\n \g|^2&\le & -\frac{2}{\rho v e^w}|\n^2 \g|^2-\frac{2(n-1)}{\rho v e^w}|\n\g|^2\nonumber
\\&&+\frac{\rho^2-1}{\rho v}\Delta \g |\n\g|^2-\frac{n(\rho^2+1)}{\rho v}|\n\g|^4+\frac{2n\cos\varphi}{v}\g_\varphi^2-\frac{2\sin\varphi}{v}\Delta \g \g_\varphi\nonumber
\\ &\le& -\frac{n(\rho^2+1)}{\rho v}|\n\g|^4+ \frac{1}{v}\left(-\frac{2}{\rho e^w}|\n\g|^2+4\cos\varphi\g_\varphi^2\right)+o_t(1)\nonumber
\\&\le &-C|\n \g|^4+o_t(1).\label{add1}
\end{eqnarray}
Here we have used  $$-\frac{2}{\rho e^w}|\n\g|^2+4\cos\varphi\g_\varphi^2\le \left(-\frac{1+\rho^2+2\rho\cos\varphi}{\rho}+ 4\cos \varphi\right) |\n\g|^2\le 0.$$
Now we claim that $$|\n\g|^2=o_t(1).$$  The smooth convergence follows from this claim and the interpolation theorem.  We show the claim in two steps.

First, we show that there exists a sequence $\{t_i\}$ with $t_i\to \infty$ such that $$\max_{\bar\ss^n_+}|\nabla \gamma (\cdot, t_i)|^2 \to 0\quad \hbox{ as }i\to \infty.$$ 
Assume this is not true. Then there exists $\e_0>0$ and $T_0>0$ such that  $$\max_{\bar\ss^n_+}|\nabla \gamma (\cdot, t)|^2 \ge \e_0, \quad\hbox{ for }t>T_0.$$ From \eqref{add1} we have that for a large $T_1>0$ and for any $t>T_1$,
 we have
$$ \frac{d}{dt}\max_{\bar\ss^n_+}| \n \gamma|^2 \le -C \max_{\bar\ss^n_+}| \n \gamma|^4+ \frac 12 C\e_0^4 = -\frac 12 C \e_0^4,
$$
which is impossible.

Second,  we show that for any  sequence $\{s_i\}$ with $s_i\to \infty$, we have $$\max_{\bar\ss^n_+}|\nabla \gamma (\cdot, s_i)|^2 \to 0\quad \hbox{ as }i\to \infty.$$  If not, 
there exists a sequence $\{s_i\}$ with $s_i\to \infty$ such that $$\max_{\bar\ss^n_+}|\nabla \gamma (\cdot, s_i)|^2  \ge \e_1$$ for any $s_i$ and for some positive constant $\e_1$.
Without loss of generality, we may assume that $t_i<s_i$. We consider the interval $I_i:=[t_i, s_i]$ for sufficiently large $i$, such that we have from \eqref{add1} at a maximum point $x_t\in \bar\ss^n_+$
\begin{equation}\label{add2} \frac{d}{dt}\max_{\bar\ss^n_+}| \n \gamma|^2 \le  -C\max_{\bar\ss^n_+}| \n \gamma|^4+  \frac 12 C \e_1^4 \end{equation}
for any $t\ge t_i$. Let $y_i \in  \bar\ss^n_+$  and $ \bar t_i \in [t_i, s_i]$ such that $$|\nabla \gamma (y_i, \bar t_i)|^2=\max_{t\in [t_i,s_i]}\max_{\bar\ss^n_+}|\nabla \gamma (\cdot, t)|^2\ge \e_1.$$
By the first step, we may assume that $\bar t_i\not =t_i$ for $i$ large. It follows that $$\frac{d}{dt}\max_{\bar\ss^n_+}| \n \gamma|^2(\bar t_i)\ge 0.$$ 
Together with \eqref{add2}, implies that $$|\nabla \gamma (y_i, \bar t_i)|^2 <\e_1,$$ a contradiction. This proves the claim.

From the claim, it follows easily that $\gamma(t)$ converges smoothly to a constant $\gamma_0$
and  $\rho\to\rho_0$ smoothly for some constant $\rho_0>0$, depending on the initial enclosed volume of $x_0$. 

Next we show the exponential convergence in the case $n=2$ and the enclosed volume of $x_0$ is not that of a half ball. In this case, $\rho_0\not = 1$. We return to \eqref{eq-grad''}. By choosing $\epsilon<1$ close to $1$, we have
\begin{eqnarray*}
 \p_t |\n \g|^2(x_t, t)&\le& \frac{1}{\rho v}\left(\frac{(\rho^2-1)^2e^w}{8(1-\epsilon)}-n(\rho^2+1)\right)|\n \g|^4\nonumber\\&&+
\frac{1}{v}\left(-\frac{2}{\rho e^w}+4\cos\varphi +\frac{\rho e^w\sin^2\varphi }{2\epsilon} \right)|\nabla \g|^2
\\&\le & \frac{1}{\rho v}\left(\frac{(\rho^2-1)^2e^w}{8(1-\epsilon)}-n(\rho^2+1)\right)|\n \g|^4\nonumber\\&&+
\frac{1}{v}\left(-\frac{(1-\rho\cos\varphi)^2}{\rho}+\rho\sin^2\varphi \left(\frac{1}{\epsilon(1+\rho^2+2\rho\cos\varphi)}-1 \right)\right)|\nabla \g|^2
\\&\le& C|\n \g|^4-\left(\frac{(1-\rho\cos\varphi)^2}{\rho}+C\rho\sin^2\varphi\right)|\nabla \g|^2.
\end{eqnarray*}
As $\rho$  converges to $\rho_0\not = 1$, 
$$-\left(\frac{(1-\rho\cos\varphi)^2}{\rho}+C\rho\sin^2\varphi\right)\le -C_1$$ for some $C_1>0$ and $t$ large. Then the exponential convergence follows. 

\qed

\

\noindent{\it Proof of Corollary \ref{coro1}.} It follows from Theorem \ref{mainthm} and Proposition \ref{pro3.3}. \qed

\

\end{document}